\newtheorem{thm}{Theorem}[section]
\newtheorem{lem}[thm]{Lemma}
\theoremstyle{definition}
\newtheorem{defi}[thm]{Definition}
\newtheorem{exa}{\bf Example}
\theoremstyle{remark}
\newtheorem{rem}[thm]{Remark}
\newcommand{\eps}{\varepsilon}
\newcommand{\p}{\varphi}
\newcommand{\R}{\mathbb{R}}
\newcommand{\N}{\mathbb{N}}
\newcommand{\K}{\mathcal{K}}
\newcommand{\C}{\mathcal{C}}
\newcommand{\vp}{\vec \varphi}
\newcommand\bigfrown[2][\textstyle]{\ensuremath{%
  \!\!\!\array[b]{c}\text{\scalebox{2}{$\frown$}}\\[-1.3ex]#1#2\endarray\!\!\!}}
\begin{document}

%\setpagewiselinenumbers
%\modulolinenumbers[5]
%\linenumbers

\title[Filtrations induced by continuous functions]{Filtrations induced by continuous functions}

\author[B. Di Fabio]{B. Di Fabio}
\author[P. Frosini]{P. Frosini}
\address{ARCES and Dipartimento di Matematica, Universit\`a di Bologna, Italia}
\email{\{barbara.difabio,patrizio.frosini\}@unibo.it}

\keywords{Multi-dimensional filtering function, persistent
topology, persistent homology}

\subjclass[2010]{Primary 54E45; Secondary 65D18, 68U05.}

\begin{abstract}
In Persistent Homology and Topology, filtrations are usually given
by introducing an ordered collection of sets or a continuous
function from a topological space to $\R^n$. A natural question
arises, whether these approaches are equivalent or not. In this
paper we study this problem and prove that, while the answer to
the previous question is negative in the general case, the
approach by continuous functions is not restrictive with respect
to the other, provided that some natural stability and
completeness assumptions are made. In particular,  we show that
every compact and stable $1$-dimensional filtration of a compact
metric space is induced by a continuous function. Moreover, we
extend the previous result to the case of multi-dimensional
filtrations, requiring that our filtration is also complete. Three
examples show that we cannot drop the assumptions about stability
and completeness. Consequences of our results on the definition of
a distance between filtrations are finally discussed.
\end{abstract}

\maketitle

\section*{Introduction}
The concept of filtration is the start point for Persistent
Topology and Homology. Actually, the main goal of these theories
is to examine the topological and homological changes that happen
when we go through a family of spaces that is totally ordered with
respect to inclusion \cite{EdHa09}. In literature, filtrations are
usually given in two ways. The former consists of explicitly
introducing a nested collection of sets (usually carriers of
simplicial complexes), the latter of giving a continuous function
from a topological space to $\R$ or $\R^n$ (called a
\emph{filtering function}), whose sub-level sets represent the
elements of the considered filtration (cf., e.g.,
\cite{EdHa08,Gh08}). An example of these two types of filtrations
is shown in Figure \ref{Esempio}. The two considered methods have
produced two different approaches to study the concept of
persistence. A natural question arises, whether these approaches
are equivalent or not. In our paper we study this problem and
prove that, while the answer to the previous question is negative
in the general case, the approach by continuous functions is not
restrictive with respect to the other, provided that some natural
stability and completeness assumptions are made. In some sense,
this statement shows that the approach by continuous functions
(and the related theoretical properties) can be used without loss
of generality, and represents the main result of this paper.

The interest in this investigation is mainly due to the desire of
building a bridge between the two settings, which would ensure
that results available in literature for the approach by functions
are also valid for the other method. As examples of results that
have been proved in one setting and that it would be desirable to
apply to the other, we can cite \cite{CoEdHa07} and
\cite{CeDi*10}, in which persistence diagrams in the 1-dimensional
and $n$-dimensional setting, respectively, are proved to be stable
shape descriptors via the use of the associated filtering
functions. Another example can be found in \cite{DiLa10}, where a
Mayer-Vietoris formula involving the ranks of persistent homology
groups of a space and its subspaces is obtained by defining a
filtering function for the union space and taking account of its
restrictions to the considered subspaces.
\begin{figure}[htbp]
\psfrag{a}{$\subseteq$}\psfrag{h1}{$\C_1$}\psfrag{h2}{$\C_2$}\psfrag{h3}{$\C_3$}\psfrag{h4}{$\C_4$}
\psfrag{k1}{$\K_1$}\psfrag{k2}{$\K_2$}\psfrag{k3}{$\K_3$}\psfrag{k4}{$\K_4$}\psfrag{f}{$\p$}
\begin{center}
\includegraphics[width=\textwidth]{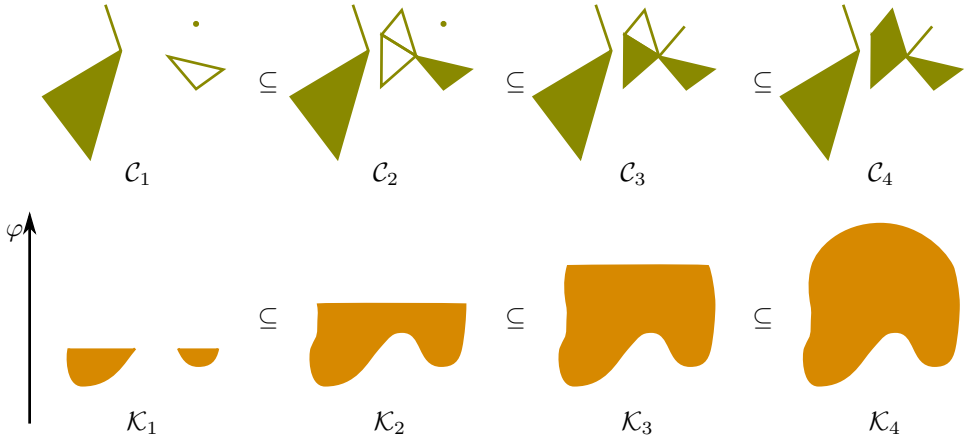}
\end{center}
\caption{\footnotesize{Examples of filtrations. First row: nested
carriers of simplicial complexes $\{\C_i\}$. Second row: the
sub-level sets $\{\K_i\}$ of a real-valued continuous function
$\p$.}}\label{Esempio}
\end{figure}

Another important reason which drives our investigation is related
to the problem of defining a distance between different
filtrations of the same space. Nowadays, this problem is usually
tackled by translating the direct comparison between two
filtrations into the comparison of the associated persistence
diagrams through the study of persistent homology. Unfortunately,
there exist some simple examples showing that this kind of
comparison is not always able to distinguish two different
filtrations (see e.g. \cite{CaFePo01,FrLa11,DiLa12}). For this
reason, our idea is to define a distance between filtrations in
terms of a distance between the associated filtering functions,
and to this scope, we need to prove that each filtration is
induced by at least one function (see Section \ref{distance}).

In this paper we just consider stable filtrations. The property of
stability of a filtration we ask for is motivated by the fact that
in real applications we need to work with methodologies that are
robust in the presence of noise. As a consequence, we have to
require that the inclusions considered in our filtration persist
under the action of small perturbations. For the same reason, we
also need that a small change of the parameter in our filtration
(whenever applicable) does not produce a large change of the
associated set with respect to the Hausdorff distance. These
assumptions are formalized by our definition of stable filtration
(Definition \ref{stableDef}).

In order to make our treatment as general as possible, we just
require that the sets $\K_i$ ($i\in I$) in our filtration are
compact subsets of a compact metric space $K$, and that the
indexing set $I$ is compact.

The paper starts by considering filtrations indexed by a
1-dimensional parameter. In this setting, after proving some
lemmas, we show that every compact and stable 1-dimensional
filtration of a compact metric space is induced by a continuous
function (Theorem \ref{monofil}). In the last part of the paper,
this result is extended to the case of multi-dimensional
filtrations (Theorem \ref{multfil}), i.e. the case of filtrations
indexed by an $n$-dimensional parameter (cf. \cite{Ca09,CaZo07}).
In order to do that, we need to assume also that our filtration is
complete, i.e. compatible with respect to intersection (Definition
\ref{defComplete}). Three examples show that we cannot drop either
the assumption about stability or the one concerning completeness
(Examples \ref{exastab-a}, \ref{exastab-b} and \ref{exacomplete}).
Some considerations on the consequences of our results conclude
the paper.

\section{Preliminaries}

In this section we give the preliminary concepts and the notation that
will be used throughout the paper.

%\begin{defi} The Hausdorff distance between two spaces $X,Y$ is
%defined as $$d_{\mathrm H}(X,Y) = \max\left\{\underset{x \in
%X}\sup\, \underset{y\in Y}\inf d(x,y),\, \underset{y\in Y}\sup\,
%\underset{x \in X}\inf d(x,y)\,\right\}.$$
%\end{defi}

Let $(K,d)$ be a non-empty compact metric space. Let us denote by
$Comp(K)$ the set $\{\K: \K\,\mbox{compact in}\,K\}$. Let us
consider the Hausdorff distance $d_H$ on $Comp(K)\setminus
\{\emptyset\}$. Moreover, let $I$ be a non-empty subset of $\R^n$
such that $I=I_1\times I_2\times\ldots\times I_n$. The following
relation $\preceq$ is defined in $I$: for $i=(i_1,\dots,i_n),
i'=(i'_1,\dots,i'_n)\in I$, we say $i\preceq i'$ if and only if
$i_r\leq\ i'_r$ for every $r=1,\dots,n$.

\begin{defi}
\label{deffilt} An $n$-dimensional \emph{filtration} of $K$ is an
indexed family $\{\K_{i}\in Comp(K)\}_{i\in I}$ such that,
$\emptyset,K\in\{\K_i\}_{i\in I}$, and $\K_{i}\subseteq \K_{i'}$
for every $i, i'\in I$, with $i\preceq i'$.
\end{defi}

\begin{defi}
An $n$-dimensional filtration $\{\K_{i}\}_{i\in I}$ of $K$ is
\emph{induced by a function} $\vp:K\to\R^n$ if $\K_{i}=\{P\in K:
\vp(P)\preceq i\}$ for every $i\in I$.
\end{defi}

\begin{defi}\label{compact}
We shall call \emph{compact}, or \emph{finite} any filtration
$\{\K_{i}\}_{i\in I}$ with $I=I_1\times I_2\times\ldots\times I_n$
a compact, or finite subset of $\R^n$, respectively.
\end{defi}

\begin{rem}
When $I$ is bounded, the assumption that
$\emptyset,K\in\{\K_i\}_{i\in I}$ is not so restrictive, since
each family of compact sets verifying the last property in
Definition~\ref{deffilt} can be extended to a family containing
$\emptyset$ and $K$, without losing that property. This assumption
allows us a more concise exposition.
\end{rem}

\section{Mono-dimensional filtrations}\label{Mono}
This section is devoted to prove our main result in the case of
filtrations indexed by a 1-dimensional parameter (Theorem
\ref{monofil}). Therefore, in what follows, the symbol $I$ will denote a
non-empty subset of $\R$.

For every subset $X\subseteq K$, let us denote by $\overline{X}$,
$int(X)$, $\partial X$, and $X^{c}$ the closure, the interior, the
boundary, and the complement of $X$ in $K$, respectively. We
recall that $int(X)^c=\overline{X^c}$.

\begin{defi}\label{stableDef}
We shall say that a compact $1$-dimensional filtration
$\{\K_i\}_{i\in I}$ of $K$ is \emph{stable with respect to the
metric} $d$ if the following statements hold:
\begin{itemize}
\item[$(a)$] The functions $i\mapsto\K_i$ and $i\mapsto\overline{\K_{i}^c}$ are continuous, i.e. if $(i_m\in I)_{m\in \N}$ is a sequence converging to
$\bar \imath\in I$, the sequence $(\K_{i_m})$ converges to
$\K_{\bar \imath}$, and the sequence $(\overline{\K_{i_m}^c})$
converges to $(\overline{\K_{\bar \imath}^c})$ with respect to the
Hausdorff distance $d_H$. \item[$(b)$] For every set
$\K_i$ and every $j\in I$ with $i<j$, $\K_i\subseteq int(\K_j)$.
\end{itemize}
\end{defi}

\begin{rem}
Let us note that in the case $\{\K_i\}_{i\in I}$ is a finite
$1$-dimensional filtration of $K$, Definition \ref{stableDef}
reduces to Definition \ref{stableDef}~$(b)$.
\end{rem}

\begin{rem}
We observe that, in Definition \ref{stableDef} $(a)$, the
convergence of the sequence $\left(\K_{i_m}\right)$ does not imply
the convergence of the sequence
$\left(\overline{\K_{i_m}^c}\right)$. Indeed, for example, let us
consider the following compact filtration of the set $K=[0,1]\cup
\{2\}$ ($K$ is endowed with the Euclidean metric). We take
$I=\{-1\}\cup [0,1]\cup \{2\}$ and set $\K_i=[0,i]$ for $i\in
[0,1]$, while $\K_{-1}=\emptyset$ and $\K_{2}=K$. It is immediate
to check that the sequence $\left(\K_{1-1/m}\right)$ converges to
$\K_{1}$, but the sequence $\left(\overline{\K_{1-1/m}^c}\right)$
does not converge to $\overline{\K_{1}^c}$.
\end{rem}

The following Lemmas \ref{ABcompact}--\ref{alphabeta} provide
meaningful properties of two functions $\alpha,\beta:K\to I$ which
turn out to be useful in the proof of our main result.

\begin{lem}\label{ABcompact}
Let $\{\K_i\}_{i\in I}$ be a compact and stable 1-dimensional
filtration of $K$. For every $P\in K$, let $A(P)=\{i\in I, P\in
\overline{\K_i^c}\}=\{i\in I, P\notin int(\K_i)\}$ and
$B(P)=\{i\in I, P\in\K_i\}$. Then $A(P)$ and $B(P)$ are non-empty
subsets of $I$. Moreover, $\sup A(P)\in A(P)$ and $\inf B(P)\in
B(P)$.
\end{lem}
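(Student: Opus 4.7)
My plan is to handle the claim in two parts. First, non-emptiness follows directly from Definition~\ref{deffilt}: pick indices $i_\emptyset,i_K\in I$ with $\K_{i_\emptyset}=\emptyset$ and $\K_{i_K}=K$, so that $P\in K=\K_{i_K}$ shows $i_K\in B(P)$, and $\overline{\K_{i_\emptyset}^c}=\overline{K}=K\ni P$ (equivalently $P\notin\emptyset=int(\K_{i_\emptyset})$) shows $i_\emptyset\in A(P)$.

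For the attainment of $\sup A(P)\in A(P)$ and $\inf B(P)\in B(P)$, since $I$ is compact in $\R$ it suffices to prove that $A(P)$ and $B(P)$ are closed subsets of $I$. I would argue for $B(P)$ as follows: take a sequence $(i_m)\subseteq B(P)$ with $i_m\to\bar\imath$; then $\bar\imath\in I$ by compactness, and each $\K_{i_m}$ is non-empty since it contains $P$, so the continuity of $i\mapsto\K_i$ assumed in Definition~\ref{stableDef}~$(a)$ gives $d_H(\K_{i_m},\K_{\bar\imath})\to 0$. Hence $d(P,\K_{\bar\imath})\leq d_H(\K_{i_m},\K_{\bar\imath})\to 0$, and since $\K_{\bar\imath}$ is closed this forces $P\in\K_{\bar\imath}$, i.e., $\bar\imath\in B(P)$. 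The argument for $A(P)$ is identical after replacing $\K_i$ by $\overline{\K_i^c}$ and invoking the second continuity assumption in Definition~\ref{stableDef}~$(a)$: from $P\in\overline{\K_{i_m}^c}$ and $d_H(\overline{\K_{i_m}^c},\overline{\K_{\bar\imath}^c})\to 0$ one concludes $P\in\overline{\K_{\bar\imath}^c}$.

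The one delicate point I anticipate is that $d_H$ is defined only on $Comp(K)\setminus\{\emptyset\}$, so I must rule out the candidate limit sets $\K_{\bar\imath}$ or $\overline{\K_{\bar\imath}^c}$ being empty. In both arguments the approximating sets uniformly contain $P$ and are therefore non-empty, so if the limit set were empty the Hausdorff convergence could not be interpreted consistently (it would require the sequence to be eventually empty, which it is not). Once this bookkeeping is dispatched, the proof reduces to the $1$-Lipschitz dependence of the point-to-set distance on $d_H$, which is immediate from the definition of the Hausdorff distance.
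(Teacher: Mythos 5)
Your proposal is correct and follows essentially the same route as the paper: non-emptiness from the presence of $\emptyset$ and $K$ in the filtration, and attainment of $\sup A(P)$ and $\inf B(P)$ from the compactness of $I$ together with the continuity assumption in Definition~\ref{stableDef}~$(a)$ and the $1$-Lipschitz dependence of $d(P,\cdot)$ on $d_H$. The only cosmetic difference is that you prove $A(P)$ and $B(P)$ are closed directly, while the paper runs the contrapositive of the same estimate on a monotone sequence converging to the sup (resp.\ inf); your explicit remark that the limit sets cannot be empty because the approximating sets all contain $P$ is a point the paper leaves implicit.
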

\begin{proof}
First of all let us observe that both $A(P)$ and $B(P)$ are
non-empty because $i_{\min}=\min\{i\in I\}\in A(P)$ since
$\emptyset=\K_{i_{\min}}\in\{\K_i\}_{i\in I}$, and
$i_{\max}=\max\{i\in I\}\in B(P)$ since
$K=\K_{i_{\max}}\in\{\K_i\}_{i\in I}$.

Let $\alpha(P)=\sup A(P)$. Because of the compactness of $I$,
$\alpha(P)\in I$ and is finite. Let us show that $\alpha(P)\in
A(P)$. Let $(i_r)$ be a non-decreasing sequence of indices of
$A(P)$ converging to $\alpha(P)$. From
Definition~\ref{stableDef}~$(a)$, it follows that
$(\overline{\K_{i_r}^c})$ converges to
$\overline{\K_{\alpha(P)}^c}$. We have to prove that $\alpha(P)\in
A(P)$, i.e. $P\in\overline{\K_{\alpha(P)}^c}$. By contradiction,
let us assume that $P\notin\overline{\K_{\alpha(P)}^c}$. Since
$\overline{\K_{\alpha(P)}^c}$ is compact,
$d(P,\overline{\K_{\alpha(P)}^c})>0$. Therefore, for any
large enough index $r$, the inequality
$d_H(\overline{\K_{\alpha(P)}^c},\overline{\K_{i_r}^c})<d(P,\overline{\K_{\alpha(P)}^c})$
holds. Hence
$d(P,\overline{\K_{i_r}^c})\ge d(P,\overline{\K_{\alpha(P)}^c})-d_H(\overline{\K_{\alpha(P)}^c},\overline{\K_{i_r}^c})>0$ for any large enough index $r$, contrarily to our
assumption that $i_r\in A(P)$, i.e. $P\in\overline{\K_{i_r}^c}$.

Let $\beta(P)=\inf B(P)$. Because of the compactness of $I$,
$\beta(P)\in I$ and is finite. Let us show that $\beta(P)\in
B(P)$. Let $(i_r)$ be a non-increasing sequence of indices of
$B(P)$ converging to $\beta(P)$. From
Definition~\ref{stableDef}~$(a)$, it follows that $(\K_{i_r})$
converges to $\K_{\beta(P)}$. We have to prove that $\beta(P)\in
B(P)$, i.e. $P\in\K_{\beta(P)}$. By contradiction, let us assume
that $P\notin\K_{\beta(P)}$. Since $\K_{\beta(P)}$ is compact,
$d(P,\K_{\beta(P)})>0$. Therefore, for any large enough
index $r$, the inequality
$d_H\left(\K_{\beta(P)},\K_{i_r}\right)<d\left(P,\K_{\beta(P)}\right)$
holds. Hence
$d(P,\K_{i_r})>0$ for any large enough index $r$, contrarily to our
assumption that $i_r\in B(P)$, i.e. $P\in \K_{i_r}$.
\end{proof}

By virtue of the above Lemma \ref{ABcompact}, for every $P\in K$,
we can define $\alpha(P)=\max A(P)\in I$ and $\beta(P)=\min
B(P)\in I$. In plain words, for every $P\in K$, $\K_{\alpha(P)}$
is the largest compact $\K_i$ in the filtration such that
$P\in\overline{\K_i^c}=
int\left(\K_i\right)^c$, while $\K_{\beta(P)}$ is the
smallest compact $\K_j$ in the filtration such that $P\in \K_j$.
In particular, $P\in\overline{\K_{\alpha(P)}^c}\cap \K_{\beta(P)}$.

\begin{lem}\label{lemmaalphabeta}
Let $\{\K_i\}_{i\in I}$ be a compact and stable 1-dimensional
filtration of $K$.Then the following statements hold:
\begin{enumerate}
\item $\alpha(P)\le\beta(P)$ for every $P\in K$.
\item If $P,Q\in K$ and $\alpha(P)<\alpha(Q)$,
then $\beta(P)\le\alpha(Q)$.
\item If $P,Q\in K$ and $\beta(P)<\beta(Q)$,
then $\beta(P)\le\alpha(Q)$.
\end{enumerate}
\end{lem}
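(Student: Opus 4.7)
The plan is to exploit two complementary dualities: stability $(b)$ (which propagates set-containment strictly, turning $i<j$ into $\K_i\subseteq int(\K_j)$) and the extremality of $\alpha(P)$ and $\beta(P)$ in $A(P)$ and $B(P)$ (which says that any index strictly above $\alpha(P)$ forces $P$ into the interior, and any index strictly below $\beta(P)$ forces $P$ outside $\K_i$). All three claims should reduce to a one-line contradiction combining these two facts, so no real topological machinery beyond Lemma \ref{ABcompact} and Definition \ref{stableDef}$(b)$ should be needed.

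For $(1)$, I would argue by contradiction: if $\beta(P)<\alpha(P)$, then stability $(b)$ yields $\K_{\beta(P)}\subseteq int(\K_{\alpha(P)})$, and since $P\in \K_{\beta(P)}$ this forces $P\in int(\K_{\alpha(P)})$, contradicting $\alpha(P)\in A(P)$.

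For $(2)$, I would use the maximality of $\alpha(P)$ in $A(P)$. Since $\alpha(Q)\in I$ by Lemma \ref{ABcompact} and $\alpha(P)<\alpha(Q)$, we have $\alpha(Q)\notin A(P)$, so $P\in int(\K_{\alpha(Q)})\subseteq \K_{\alpha(Q)}$. Thus $\alpha(Q)\in B(P)$, and by minimality of $\beta(P)$ we get $\beta(P)\le \alpha(Q)$. For $(3)$, I would argue symmetrically by contradiction: if $\beta(P)>\alpha(Q)$, then $\beta(P)\notin A(Q)$ by maximality of $\alpha(Q)$, so $Q\in int(\K_{\beta(P)})\subseteq \K_{\beta(P)}$, i.e.\ $\beta(P)\in B(Q)$, giving $\beta(Q)\le \beta(P)$, against the hypothesis $\beta(P)<\beta(Q)$.

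The main (and really only) subtlety I foresee is making sure that in $(2)$ and $(3)$ the index being tested (namely $\alpha(Q)$ or $\beta(P)$) actually lies in $I$ so that the sets $\K_{\alpha(Q)}$ and $\K_{\beta(P)}$ are available from the filtration; this is guaranteed by Lemma \ref{ABcompact}, which places $\alpha(\cdot),\beta(\cdot)$ in $I$ via compactness of $I$. No use of the Hausdorff-continuity condition $(a)$ will be needed here; that hypothesis has already been absorbed in establishing that $\alpha$ and $\beta$ take values in $I$ and realize their respective sup and inf.
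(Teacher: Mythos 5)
Your proposal is correct and follows essentially the same route as the paper: part $(1)$ via stability $(b)$ plus the membership $P\in\K_{\beta(P)}$, and parts $(2)$--$(3)$ purely from the extremality of $\alpha$ and $\beta$ in $A(P)$ and $B(Q)$ together with $\alpha(Q),\beta(P)\in I$ from Lemma \ref{ABcompact}. The only cosmetic difference is that you prove $(3)$ by contraposition (assuming $\beta(P)>\alpha(Q)$) while the paper argues directly that $\beta(P)\in A(Q)$; the logical content is identical, and your observation that condition $(a)$ enters only through Lemma \ref{ABcompact} matches the paper.
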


\begin{proof}
\

\begin{enumerate}

\item To show that $\alpha(P)\le \beta(P)$, let us verify that, if
$i_1\in A(P)$ (i.e. $P\in\overline{\K_{i_1}^c}$) and $i_2\in B(P)$
(i.e. $P\in\K_{i_2}$), then $i_1\leq i_2$. By contradiction, let
us assume that $i_2<i_1$. Then Definition~\ref{stableDef}~$(b)$
implies that $\K_{i_2}\subseteq int(\K_{i_1})$. Since $P\in
\K_{i_2}$, it follows that $P\in int(\K_{i_1})$, i.e. $P\notin
\overline{\K_{i_1}^c}$, against the assumption $i_1\in A(P)$.
\item Since $\alpha(P)<\alpha(Q)$, it follows that $P\in int(\K_{\alpha(Q)})$, while $P\notin
int\left(\K_{\alpha(P)}\right)$.
In particular, $P\in \K_{\alpha(Q)}$. Therefore $\alpha(Q)\in
B(P)$ and hence $\beta(P)\le \alpha(Q)$. \item Since
$\beta(P)<\beta(Q)$, it follows that
$Q\notin \K_{\beta(P)}$, while $Q\in \K_{\beta(Q)}$. In particular, $Q\notin
int\left(\K_{\beta(P)}\right)$. Therefore $\beta(P)\in A(Q)$ and
hence $\beta(P)\le \alpha(Q)$.

\end{enumerate}
\end{proof}

\begin{rem}\label{boundary}
Let us observe that under the assumptions of compactness and
stability of $\{\K_i\}_{i\in I}$, it follows that, for every $P\in
K$ with $P\in\partial\K_i$ for a certain $i\in I$,
$\alpha(P)=\beta(P)=i$. Indeed, from Lemma
\ref{lemmaalphabeta}~(1), we have $\alpha(P)\le\beta(P)$ for every
$P\in K$. On the other side, since $P\in\partial\K_i$ implies both
that $P\in\K_i$, whence $\beta(P)\le i$, and that $P\notin
int(\K_i)$, whence $\alpha(P)\ge i$, the equality is proved.
\end{rem}

\begin{lem}\label{alphabeta}
Let $\{\K_i\}_{i\in I}$ be a compact and stable 1-dimensional
filtration of $K$.Then the following statements hold:
\begin{enumerate}
\item The function $\alpha$ is everywhere upper semi-continuous.
\item
The function $\beta$ is everywhere lower semi-continuous.
\end{enumerate}
\end{lem}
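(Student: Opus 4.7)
My plan is to prove both statements directly from the definition of semi-continuity, by extracting convergent subsequences (possible since $I\subset\R$ is compact) and then passing to the limit via the Hausdorff continuity assumed in Definition~\ref{stableDef}~$(a)$. The key auxiliary fact I will use is the following elementary observation: if $(S_m)$ is a sequence of non-empty compact subsets of $K$ converging in Hausdorff distance to a compact set $S$, and if $P_m\in S_m$ for every $m$ with $P_m\to P$, then $P\in S$. This follows immediately from the estimate $d(P,S)\le d(P,P_m)+d(P_m,S)\le d(P,P_m)+d_H(S_m,S)\to 0$ together with the closedness of $S$.

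For statement $(1)$, I would take an arbitrary sequence $(P_m)$ in $K$ converging to $P$ and set $\bar\imath=\limsup_{m\to\infty}\alpha(P_m)$. Extract a subsequence $(P_{m_k})$ such that $\alpha(P_{m_k})\to\bar\imath$; this is possible because $(\alpha(P_m))$ lies in the compact set $I$. By Lemma \ref{ABcompact}, $P_{m_k}\in\overline{\K_{\alpha(P_{m_k})}^c}$ for every $k$. By Definition \ref{stableDef}~$(a)$ applied to the convergent sequence $\alpha(P_{m_k})\to\bar\imath$, the sets $\overline{\K_{\alpha(P_{m_k})}^c}$ converge to $\overline{\K_{\bar\imath}^c}$ in Hausdorff distance. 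The auxiliary fact above then gives $P\in\overline{\K_{\bar\imath}^c}$, i.e. $\bar\imath\in A(P)$, whence $\bar\imath\le\alpha(P)$. This proves the upper semi-continuity of $\alpha$ at $P$.

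For statement $(2)$, the argument is perfectly symmetric. Given $P_m\to P$, set $\underline{\imath}=\liminf_{m\to\infty}\beta(P_m)$ and extract a subsequence $(P_{m_k})$ with $\beta(P_{m_k})\to\underline{\imath}$. By Lemma \ref{ABcompact}, $P_{m_k}\in\K_{\beta(P_{m_k})}$, and by Definition \ref{stableDef}~$(a)$ the sequence $\K_{\beta(P_{m_k})}$ converges to $\K_{\underline{\imath}}$ in the Hausdorff distance. The same auxiliary fact yields $P\in\K_{\underline{\imath}}$, i.e. $\underline{\imath}\in B(P)$, whence $\underline{\imath}\ge\beta(P)$. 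This is exactly the lower semi-continuity of $\beta$ at $P$.

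No serious obstacle is expected: the content of the statement is essentially the translation of the Hausdorff-continuity hypothesis on $i\mapsto\K_i$ and $i\mapsto\overline{\K_i^c}$ into a semi-continuity property of the maximal and minimal index functions. The only point that requires some care is the asymmetry built into the definitions of $A(P)$ and $B(P)$ (one uses $\overline{\K_i^c}$, the other uses $\K_i$), which is precisely what forces $\alpha$ to be upper semi-continuous and $\beta$ to be lower semi-continuous rather than the other way around; in particular, $(a)$ is used in its full strength, requiring \emph{both} limit conditions in Definition \ref{stableDef}.
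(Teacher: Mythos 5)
Your proof is correct and follows essentially the same route as the paper's: extract a convergent subsequence of the index sequence (you realize the $\limsup$/$\liminf$, the paper takes an arbitrary convergent subsequence, which is equivalent), use Lemma \ref{ABcompact} to place $P_{m_k}$ in $\overline{\K_{\alpha(P_{m_k})}^c}$ or $\K_{\beta(P_{m_k})}$, and pass to the limit via the Hausdorff continuity of Definition \ref{stableDef}~$(a)$. The only difference is that you spell out the elementary ``points in converging compact sets converge into the limit set'' fact that the paper leaves implicit.
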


\begin{proof}  Let us consider a sequence $(P_r)$ of points in $K$ converging to
a point $P\in K$.
\begin{enumerate}
\item Let $(\alpha(P_{r_k}))$ be a converging subsequence of
$(\alpha(P_r))$. Let us set $L\overset{def}{=}\lim_k
\alpha(P_{r_k})$. From the compactness of $I$, $L\in I$, and from
Definition \ref{stableDef}~$(a)$, the sequence
$(\overline{\K_{\alpha(P_{r_k})}^c})$ converges to the compact set
$\overline{\K_{L}^c}$ with respect to $d_H$. Since $P=\lim_k
P_{r_k}$, and $P_{r_k}\in \overline{\K_{\alpha(P_{r_k})}^c}$, we
have that $P\in \overline{\K_{L}^c}$, and hence $\alpha(P)\ge L$.
Therefore, the function $\alpha$ is everywhere upper
semi-continuous.

\item Let $(\beta(P_{r_k}))$ be a converging subsequence of
$(\beta(P_r))$. Let us set $L\overset{def}{=}\lim_k
\beta(P_{r_k})$. From the compactness of $I$, $L\in I$, and from
Definition \ref{stableDef}~$(a)$, the sequence
$(\K_{\beta(P_{r_k})})$ converges to the compact set $\K_{L}$ with
respect to $d_H$. Since $P=\lim_k P_{r_k}$, and $P_{r_k}\in
\K_{\beta(P_{r_k})}$, we have that $P\in \K_{L}$, and hence
$\beta(P)\le L$. Therefore, the function $\beta$ is everywhere
lower semi-continuous.

\end{enumerate}
\end{proof}

\begin{thm}\label{monofil}
Every compact and stable $1$-dimensional filtration
$\{\K_i\}_{i\in I}$ of a compact metric space $K$ is induced by a
continuous function $\p:K\to\R$.
\end{thm}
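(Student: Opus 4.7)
My plan is to build $\p$ by interpolating, between consecutive levels of the filtration, the values prescribed by the endpoints of the intrinsic ``gaps'' of $I$. The starting observation is that for every $P\in K$ with $\alpha(P)<\beta(P)$, the open interval $(\alpha(P),\beta(P))$ must be disjoint from $I$: indeed, if some $k\in I$ lay in it, then either $P\in\K_k$ (forcing $\beta(P)\le k$) or $P\in\overline{\K_k^c}$ (forcing $\alpha(P)\ge k$), contradicting $\alpha(P)<k<\beta(P)$. Hence, by maximality, $(\alpha(P),\beta(P))$ coincides with a connected component $(a_\lambda,b_\lambda)$ of the complement of $I$ in $[\min I,\max I]$, with $a_\lambda,b_\lambda\in I$. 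Let $\{(a_\lambda,b_\lambda)\}_{\lambda\in\Lambda}$ denote the (at most countable) family of such gaps. By stability $(b)$, every $P$ with $\alpha(P)=a_\lambda$, $\beta(P)=b_\lambda$ lies in the open ``annulus'' $\Omega_\lambda:=int(\K_{b_\lambda})\setminus\K_{a_\lambda}$, whereas for every other $P\in K$ one has $\alpha(P)=\beta(P)\in I$, so $P\in\partial\K_{\alpha(P)}$.

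For each $\lambda$ I would introduce the Urysohn-type function
$$u_\lambda(P)=\frac{d(P,\K_{a_\lambda})}{d(P,\K_{a_\lambda})+d(P,\overline{\K_{b_\lambda}^c})},$$
continuous on $K$, equal to $0$ on $\K_{a_\lambda}$ and to $1$ on $\overline{\K_{b_\lambda}^c}$ (with evident ad hoc modifications if $\K_{a_\lambda}=\emptyset$ or $\overline{\K_{b_\lambda}^c}=\emptyset$, which can only occur for the two extreme gaps). Setting $\p_\lambda(P):=a_\lambda+(b_\lambda-a_\lambda)u_\lambda(P)$, I would then define
$$\p(P)=\begin{cases}\alpha(P),&\text{if }\alpha(P)=\beta(P),\\ \p_\lambda(P),&\text{if }\alpha(P)=a_\lambda<b_\lambda=\beta(P).\end{cases}$$
The identity $\{P\in K:\p(P)\le i\}=\K_i$ for every $i\in I$ is then straightforward: in both cases $\p(P)\le\beta(P)$, so $P\in\K_i$ gives $\p(P)\le i$; conversely, if $P\notin\K_i$ then $i<\beta(P)$, and either $\p(P)=\beta(P)>i$ directly, or (in the annulus case) $i\le a_\lambda$ combined with $u_\lambda(P)>0$ (which follows from $P\notin\K_{a_\lambda}$) yields $\p(P)>a_\lambda\ge i$.

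The hard part, and the main obstacle, is the continuity of $\p$. On each open $\Omega_\lambda$ it is clear, since $\p$ coincides there with the continuous $\p_\lambda$. The delicate case is a sequence $P_m\to P$ with $\alpha(P)=\beta(P)=i$. If all $P_m$ lie in the ``skeleton'' $K\setminus\bigcup_\lambda\Omega_\lambda$, then $\p(P_m)=\alpha(P_m)=\beta(P_m)$, and Lemma \ref{alphabeta} together with $\alpha\le\beta$ squeezes $\p(P_m)\to i$. If instead $P_m\in\Omega_{\lambda_m}$ with a priori varying $\lambda_m$, set $a_m:=\alpha(P_m)$ and $b_m:=\beta(P_m)$. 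Lemma \ref{alphabeta} gives $\limsup a_m\le i\le\liminf b_m$, and since $(a_m,b_m)$ is a gap of $I$ while $i\in I$, along any convergent subsequence $a_m\to a$, $b_m\to b$ we are forced to have $a=i$ or $b=i$. When $a=i$, the Hausdorff continuity $\K_{a_m}\to\K_i$ (first half of stability $(a)$) yields $d(P_m,\K_{a_m})\to d(P,\K_i)=0$, hence $u_{\lambda_m}(P_m)\to 0$ and $\p_{\lambda_m}(P_m)\to a=i$; when $b=i$, the Hausdorff continuity $\overline{\K_{b_m}^c}\to\overline{\K_i^c}$ (second half of stability $(a)$) analogously forces $u_{\lambda_m}(P_m)\to 1$ and $\p_{\lambda_m}(P_m)\to b=i$. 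In every case $\p(P_m)\to\p(P)$, establishing continuity. This final step is precisely where both clauses of Definition~\ref{stableDef}~$(a)$ are essential.
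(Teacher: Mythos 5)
Your construction produces, up to the grouping by gaps, exactly the paper's function: your $\p_\lambda(P)=a_\lambda+(b_\lambda-a_\lambda)u_\lambda(P)$ is the weighted mean $\bigl(\alpha(P)\, d(P,\overline{\K_{\beta(P)}^{c}})+\beta(P)\, d(P,\K_{\alpha(P)})\bigr)/\bigl(d(P,\overline{\K_{\beta(P)}^{c}})+d(P,\K_{\alpha(P)})\bigr)$ that the paper uses, including the ad hoc fixes at the two extreme gaps (the paper handles $\overline{\K_{\beta(P)}^{c}}=\emptyset$ with a dummy point $*$ at distance $\mbox{diam}(K)/2$), and the verification of $\K_i=\{\p\le i\}$ is the same. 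Where you genuinely diverge is the continuity argument. The paper extracts monotone-or-constant subsequences of $(\alpha(P_r))$ and $(\beta(P_r))$ and runs a lengthy case analysis resting on Lemma~\ref{lemmaalphabeta}~(2),(3); you instead isolate the structural fact that $(\alpha(P),\beta(P))$, when nonempty, is a connected component of $[\min I,\max I]\setminus I$, so that $K$ splits into countably many open annuli $\Omega_\lambda$ --- on each of which $\p$ coincides with a single globally continuous $\p_\lambda$ --- and a skeleton where $\alpha=\beta$ and the semicontinuity of Lemma~\ref{alphabeta} squeezes. This is shorter, and it makes visible exactly where stability $(a)$ and $(b)$ enter. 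One step needs a one-line patch: at a skeleton point $P$ with $\alpha(P)=\beta(P)=i$, along an annulus subsequence it can happen that $a_m\to i$ \emph{and} $b_m\to i$ (gaps of $I$ accumulating at $i$, e.g.\ $I\supseteq\{i+1/m\}$), in which case the denominator of $u_{\lambda_m}(P_m)$ also tends to $0$ and neither $u_{\lambda_m}(P_m)\to 0$ nor $u_{\lambda_m}(P_m)\to 1$ is guaranteed; but there the crude bound $a_m\le\p(P_m)\le b_m$ already forces $\p(P_m)\to i$, so your conclusion survives. With that sub-case stated explicitly (and the degenerate-gap conventions written out), the proof is complete.
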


\begin{proof}
If $\{\K_i\}_{i\in I}=\{\K_{i_{\min}}=\emptyset,
\K_{i_{\max}}=K\}$, then we can just take $\p:K\to\R$ such that
$\p(P)=i_{\max}$ for every $P\in K$. This function is continuous
and induces $\{\K_i\}_{i\in I}$.

Let us consider a proper filtration, i.e. a filtration
$\{\K_i\}_{i\in I}$ such that at least one index $i'\in I$ exists
with $i_{\min}<i'<i_{\max}$. We want to prove that there exists a
continuous function inducing it.

Let us observe that $\K_{i_{\min}}=\emptyset$ and, because of the
compactness of $I$, the value $i_1=\inf(I\setminus
\{i_{\min}\})\le i'$ must belong to $I$. The empty set cannot be
the limit of a sequence of compact non-empty sets with respect to
the Hausdorff distance. Hence it must be $i_1>i_{\min}$.
Furthermore, $\overline{\K_{i_{\max}}^c}=\overline{K^c}=\emptyset$
and, because of the compactness of $I$, the value
$i_2=\sup(I\setminus \{i_{\max}\})\geq i'$ must belong to $I$. The
empty set cannot be the limit of a sequence of compact non-empty
sets with respect to the Hausdorff distance. Hence it must be
$\K_{i_2}\neq\K_{i_{\max}}$, so that $i_2<i_{\max}$.

Now, let us fix an arbitrary point $*\notin K$, and extend the
distance $d$ from $K$ to $K\cup \{*\}$ by setting $d(*,*)=0$ and
$d(*,P)=\mbox{diam}(K)/2$ for every $P\in K$. Let us observe that
since the filtration is proper, $\mbox{diam}(K)>0$. Moreover, for
the same reason, we have that no point $P\in K$ exists such that
$\alpha(P)=i_{\min}$ and $\beta(P)=i_{\max}$. Hence, for every
$P\in K$, we can define the function $\p:K\to \R$ as follows, by
recalling the inequality in Lemma \ref{lemmaalphabeta}~$(1)$:
%
%For every $P\in K$, let $\alpha(P)=\max\{i\in I, P\in
%\overline{\K_i^c}\}$ and $\beta(P)=\min\{i\in I, P\in\K_i\}$.
%Furthermore, we set $i_{\min}=\min\{i\in I\}$ and
%$i_{\max}=\max\{i\in I\}$. These definitions are well-posed
%because of Definition \ref{stableDef}~$(a)$.
%
%
%Now we can define the function $\varphi$:

$$\p(P)=\left\{
\begin{array}{ll}
\beta(P) &\mbox{if}\,\, i_{\min}=\alpha(P)\\
\dfrac{\alpha(P)\cdot
d\left(P,\overline{\K_{\beta(P)}^{c}}\right)+\beta(P)\cdot
d\left(P,\K_{\alpha(P)}\right)}{d\left(P,\overline{\K_{\beta(P)}^{c}}\right)+d\left(P,\K_{\alpha(P)}\right)} & \mbox{if} \,\,  \small{i_{\min}<\alpha(P)<\beta(P)<i_{\max}}\\
\alpha(P) & \mbox{if} \,\,  \small{i_{\min}<\alpha(P)=\beta(P)<i_{\max}}\\
\dfrac{\alpha(P)\cdot d\left(P,*\right)+\beta(P)\cdot
d\left(P,\K_{\alpha(P)}\right)}{d\left(P,*\right)+d\left(P,\K_{\alpha(P)}\right)}
& \mbox{if} \,\,  \beta(P)=i_{\max}
\end{array}
\right.
$$

Before proceeding, we observe that
$d\left(P,\K_{\alpha(P)}\right)=0$ if and only if
$\alpha(P)=\beta(P)$, and also
$d\left(P,\overline{\K_{\beta(P)}^{c}}\right)=0$ if and only if
$\alpha(P)=\beta(P)$. Moreover, $\alpha(P)\le\p(P)\le\beta(P)$ in
all of the four cases in the definition of $\p$.

Let us prove that $\K_i=\{P\in K, \p(P)\leq i\}$ for every $i\in
I$.

Let us fix an index $i\in I$. If $P\in \K_i$ then $\beta(P)\le i$.
Hence, according to the observation above, $\p(P)\le \beta(P)\le
i$. Varying $i\in I$, this proves that $\K_i\subseteq\{P\in K,
\p(P)\leq i\}$ for every $i\in I$.

Let us show that $\K_i\supseteq\{P\in K, \p(P)\leq i\}$ for every
$i\in I$. If $P\notin \K_i$ then $P\in\K_i^c$, and hence $P\in
\overline{\K_i^c}$, so that $i\le \alpha(P)$. Since $P\notin
\K_i$, it follows that $\beta(P)>i$. Then, in all of the four
cases in the definition of $\p$ it is easy to show that $\p(P)>i$.
Therefore, in any case it results that $\p(P)> i$.

%This proves that a point of $K$ belongs to $\K_i$ if and only if it belongs to $\{P\in K, \p(P)\leq i\}$,
%i.e. $\K_i=\{P\in K, \p(P)\leq i\}$ for every $i\in
%I$.

%By the definition of $\p$, it can be easily seen that
%$\K_{i_{\min}}=\emptyset=\{P\in K,\p(P)\leq i_{\min}\}$. Moreover,
%for every $i\in I$, $i>i_{\min}$, $K_i=\{P\in K,
%i_{\min}\leq\alpha(P)\le\beta(P)\leq i\}$. Since we can observe
%that $\alpha(P)\le\p(P)\le\beta(P)$ for every $P\in K$, because of
%Lemma \ref{lemmaalphabeta} $(1)$ and the definition of $\p$, the
%claim immediately follows.

Now, let us show that $\p$ is continuous at any point $P\in K$.

First of all, let us examine the case $\alpha(P)=i_{\min}$ and the
case $\beta(P)=i_{\max}$.

If $\alpha(P)=i_{\min}$ then (since $i_1>i_{\min}$)
$\beta(P)=i_1$, and $P\in int(\K_{i_{1}})$ because of Remark
\ref{boundary}. So, there exists a neighborhood $U$ of $P$ such
that $U\subseteq int(\K_{i_{1}})$. It follows that for any point
$Q\in U$ the equalities $\alpha(Q)=i_{\min}$ and $\beta(Q)=i_{1}$
hold.

If $\beta(P)=i_{\max}$ then (since $i_2<i_{\max}$)
$\alpha(P)=i_2$, and $P\in int(\K_{i_{2}}^c)$ because of Remark
\ref{boundary}. So, there exists a neighborhood $U$ of $P$ such
that $U\subseteq int(\K_{i_{2}}^c)$. It follows that for any point
$Q\in U$ the equalities $\alpha(Q)=i_{2}$ and $\beta(Q)=i_{\max}$
hold.

In both cases, $\p$ is continuous at $P$.

%We recall that if $\alpha(P)=i_{\min}$ then there
%exists a neighborhood $U$ of $P$ such that $U\subseteq
%int(\K_{i_{1}})$. It follows that for any point $Q\in U$ the
%equality $\alpha(Q)=i_{\min}$ holds.

%Moreover, if $\beta(P)=i_{\max}$ then there exists a neighborhood $U$ of $P$ such that $U\cap
%\K_{i_{2}}=\emptyset$. It follows that for any point $Q\in U$ the
%equality $\beta(Q)=i_{\max}$ holds.

\medskip

%Before examining the other cases, we observe that $\alpha(P)\le\p(P)\le\beta(P)$ for every $P\in K$,
%because of Lemma \ref{lemmaalphabeta} $(1)$ and the definition of $\p$.

In the rest of the proof, we shall assume that
$i_{\min}<\alpha(P)$ and $\beta(P)<i_{\max}$.

In order to prove that $\p$ is continuous at $P$, it will be
sufficient to show that, if a sequence $(P_r)$ converges to $P$
and the sequence $(\varphi(P_r))$ is converging, then  $\lim_r
\p(P_r)=\p(P)$. This is due to the boundness of $\p(K)$.

Therefore, in what follows we shall assume that the sequences
$(P_r)$ and $(\varphi(P_r))$ are converging.

We recall that every real sequence admits either a strictly
monotone or a constant subsequence. Hence, by possibly extracting
a subsequence from $(P_r)$ we can assume that each of the
sequences $(\alpha(P_r)),(\beta(P_r))$ is either strictly monotone
or constant. Obviously, this choice does not change the limits of
the sequences $(P_r)$ and $(\varphi(P_r))$. Let us consider the
following two cases:

\medskip

%\begin{description}
\noindent\textbf{Case  that $(\beta(P_r))$ is strictly monotone:}
If $(\beta(P_r))$ is strictly decreasing, then
Lemma~\ref{lemmaalphabeta}~$(3)$ assures that
$\beta(P_{r+1})\le\alpha(P_r)$. As a consequence,
$$\varphi(P_{r+1})\le\beta(P_{r+1})\le\alpha(P_r)\le\varphi(P_r).$$

If $(\beta(P_r))$ is strictly increasing, then
Lemma~\ref{lemmaalphabeta}~$(3)$ assures that
$\beta(P_r)\le\alpha(P_{r+1})$. As a consequence,
$$\varphi(P_r)\le\beta(P_r)\le\alpha(P_{r+1})\le\varphi(P_{r+1}).$$

In both cases,
since the sequence $(\varphi(P_r))$ is converging,
also the sequences $(\alpha(P_r))$,
$(\beta(P_r))$ are converging and
$\lim_r\alpha(P_r)=\lim_r\beta(P_r)=\lim_r\varphi(P_r)$. Let us
call $\ell$ this limit.

The upper semi-continuity of the function $\alpha$ and the lower
semi-continuity of the function $\beta$ (Lemma \ref{alphabeta})
imply that $\alpha(P)\ge \ell\ge \beta(P)$. We already know that
$\alpha(P)\le\p(P)\le\beta(P)$, and hence
$\alpha(P)=\p(P)=\beta(P)=\ell$. Therefore,
$\varphi(P)=\lim_r\varphi(P_r)$.

\medskip

\noindent\textbf{Case  that $\beta(P_r)=L$ for every index $r$:}
If each element in the sequence $(\beta(P_r))$ is equal to a
constant $L$ then we know that, from the lower semi-continuity of
$\beta$ (Lemma \ref{alphabeta} $(2)$), $\beta(P)\le L\le i_{max}$.
\smallskip
\begin{itemize}
\item If $\beta(P)<L$, then there is no $h\in I$ such that
$\beta(P)<h<L$. Indeed, if such an index $h$ existed, Definition
\ref{stableDef}~$(b)$ would imply that $P\in\K_{\beta(P)}\subseteq
int(\K_h)$. Since $P=\lim_r P_r$, we would have that $P_r\in \K_h$
for every large enough index $r$. As a consequence, the inequality
$\beta(P_r)\le h<L$ would hold, against the assumption
$\beta(P_r)=L$ for every index $r$.

Lemma \ref{lemmaalphabeta} $(1)$ assures that $\alpha(P_r)\le
\beta(P_r)=L$ for every index $r$. Then, since $(\alpha(P_r))$ is
strictly monotone or constant, either $\alpha(P_r)=L$ for every
index $r$ or $\alpha(P_r)\le\beta(P)$ for every index $r>0$. We
observe that the case $\alpha(P_r)<\beta(P)$ cannot happen.
Indeed, if the inequality $\alpha(P_r)<\beta(P)$ held, then the
definition of $\alpha$ would imply that $P_r\in
int\left(\K_{\beta(P)}\right)\subseteq \K_{\beta(P)}$. As a
consequence, the inequality $L=\beta(P_r)\le \beta(P)$ would hold,
against the assumption $\beta(P)<L$.

In summary, if $\beta(P)<L$, then either $\alpha(P_r)=L$ for every
index $r$ or $\alpha(P_r)=\beta(P)$ for every index $r>0$, so that
$(\alpha(P_r))_{r>0}$, and therefore $(\alpha(P_r))$, is a
constant sequence.

Let us consider the following two subcases:
\begin{itemize}
\item Subcase $\alpha(P_r)=\beta(P_r)=L>\beta(P)$ for every $r$:
In this case, the upper semi-continuity of $\alpha$ implies that
$\alpha(P)\ge \lim_r\alpha(P_r)=L$, and hence that
$\alpha(P)>\beta(P)$, contradicting Lemma
\ref{lemmaalphabeta}~$(1)$. So this case is impossible.

\item Subcase $\alpha(P_r)=\beta(P)<\beta(P_r)=L$ for every $r$:
In this case, the upper semi-continuity of $\alpha$ implies that
$\alpha(P)\ge \lim_r\alpha(P_r)=\beta(P)$. Since Lemma
\ref{lemmaalphabeta} $(1)$ states that $\alpha(P)\le \beta(P)$, we
have $\alpha(P)=\beta(P)$. In summary, in this case,
$\alpha(P_r)=\alpha(P)=\beta(P)<\beta(P_r)=L$ for every index $r$.
From the definition of the function $\p$, it follows that
$\varphi(P)=\alpha(P)=\beta(P)$. Let us observe that
$\alpha(P_r)>i_{\min}$, otherwise $\alpha(P)=\beta(P)=i_{\min}$,
i.e. $P\in \K_{\beta(P)}=\K_{i_{\min}}$ in contrast with
$\K_{i_{\min}}=\emptyset$. Moreover, since $\beta(P_r)=L\le
i_{\max}$ for every index $r$, the two cases below must be
considered:

If $L<i_{\max}$, then
{\setlength\arraycolsep{1pt}\begin{eqnarray*}
\p(P_r)&=&\dfrac{\alpha(P_r)\cdot
d\left(P_r,\overline{\K_{\beta(P_r)}^{c}}\right)+\beta(P_r)\cdot
d\left(P_r,\K_{\alpha(P_r)}\right)}{d\left(P_r,\overline{\K_{\beta(P_r)}^{c}}\right)+d\left(P_r,\K_{\alpha(P_r)}\right)}\\
&=&\dfrac{\beta(P)\cdot
d\left(P_r,\overline{\K_{L}^{c}}\right)+L\cdot
d\left(P_r,\K_{\beta(P)}\right)}{d\left(P_r,\overline{\K_{L}^{c}}\right)+d\left(P_r,\K_{\beta(P)}\right)}.
 \end{eqnarray*}}
%Since $P\in \K_{\beta(P)}$ and $\lim_r P_r=P$, we have $\lim_r
%d_H(\{P_r\},\K_{\beta(P)})=0$ and $\lim_r
%d_H(\{P_r\},\overline{\K_{L}^{c}})=d_H(\{P\},\overline{\K_{L}^{c}})$.
%Hence $\lim_r\p(P_r)=\beta(P)=\p(P)$.

If $L=i_{\max}$, then
{\setlength\arraycolsep{1pt}\begin{eqnarray*}
\p(P_r)&=&\dfrac{\alpha(P_r)\cdot
d\left(P_r,*\right)+\beta(P_r)\cdot
d\left(P_r,\K_{\alpha(P_r)}\right)}{d\left(P_r,*\right)+d\left(P_r,\K_{\alpha(P_r)}\right)}\\
&=&\dfrac{\beta(P)\cdot d\left(P_r,*\right)+i_{\max}\cdot
d\left(P_r,\K_{\beta(P)}\right)}{d\left(P_r,*\right)+d\left(P_r,\K_{\beta(P)}\right)},
 \end{eqnarray*}}
with $*$ an arbitrary point not belonging to $K$, and such that
$d(*,Q)=\mbox{diam}(K)/2$ for every $Q\in K$.

Since $P\in \K_{\beta(P)}$ and $\lim_r P_r=P$, we have $\lim_r
d(P_r,\K_{\beta(P)})=0$. Furthermore, if $L<i_{\max}$, then
$\overline{\K_{L}^{c}}\neq\emptyset$, and $\lim_r
d(P_r,\overline{\K_{L}^{c}})=d(P,\overline{\K_{L}^{c}})>0$; if
$L=i_{\max}$, let us observe that $d(P_r,*)=d(P,*)>0$ for every
$P_r\in K$. Therefore, in both cases,
$\lim_r\p(P_r)=\beta(P)=\p(P)$, i.e. $\p$ is continuous at $P$.

\end{itemize}

\medskip

\item If $\beta(P)=L$, then $L< i_{max}$ (since we are assuming $\beta(P)<i_{\max}$). Recalling that
$(\alpha(P_r))$ is either a strictly monotone or a constant
bounded sequence, let $L'=\lim_r\alpha(P_r)$.

If the sequence $(\alpha(P_r))$ were strictly monotone, we could
find two indexes $r_1,r_2$ such that
$\alpha(P_{r_1}),\alpha(P_{r_2})\neq L$ and
$\alpha(P_{r_1})<\alpha(P_{r_2})$. Lemma \ref{lemmaalphabeta}
assures that $\beta(P_{r_1})\le\alpha(P_{r_2})\le\beta(P_{r_2})$.
Since $\beta(P_{r_1})=\beta(P_{r_2})=L$, it follows that
$\alpha(P_{r_2})=L$, against our assumption that
$\alpha(P_{r_1}),\alpha(P_{r_2})\neq L$. Therefore, the sequence
$(\alpha(P_r))$ must be constant.

In summary, if $\beta(P_r)=\beta(P)=L$ for every index $r$, then $\alpha(P_r)=L'$ for every index $r$.

Since the function $\alpha$ is upper semi-continuous (Lemma
\ref{alphabeta} $(1)$), we have that $\alpha(P)\ge L'$. If the
inequality $\alpha(P)> L'$ holds, then $\alpha(P_r)<\alpha(P)$ for
every index $r$. Lemma \ref{lemmaalphabeta} $(2)$ assures that
$\beta(P_r)\le \alpha(P)$, and hence $\alpha(P)\ge L$. Lemma
\ref{lemmaalphabeta} $(1)$ assures that $\alpha(P)\le\beta(P)$,
and hence $\alpha(P)\le L$. Therefore, $\alpha(P)=L$.

In summary, if $\beta(P_r)=\beta(P)=L$ for every index $r$, then either $\alpha(P)=L'$ or $\alpha(P)=L$.
\medskip

Therefore, we have to examine these last three cases:

\medskip

\begin{itemize}
\item[$(i):$] $\beta(P_r)=\beta(P)=L>\alpha(P_r)=\alpha(P)=L'$ for
every index $r$; \item[$(ii):$]
$\beta(P_r)=\beta(P)=\alpha(P)=L>\alpha(P_r)=L'$ for every index
$r$; \item[$(iii):$]
$\beta(P_r)=\beta(P)=\alpha(P)=L=\alpha(P_r)=L'$ for every index
$r$.
\end{itemize}
\medskip

\begin{itemize}
\item[$(i):$] If $\beta(P_r)=\beta(P)=L>\alpha(P_r)=\alpha(P)=L'$
for every $r$, recalling that
$i_{\min}<\alpha(P)<\beta(P)<i_{\max},$ the definition of the
function $\p$ implies that

{\setlength\arraycolsep{1pt}\begin{eqnarray*}
\p(P_r)&=&\dfrac{\alpha(P_r)\cdot
d(P_r,\overline{\K_{\beta(P_r)}^{c}})+\beta(P_r)\cdot
d(P_r,\K_{\alpha(P_r)})}{d(P_r,\overline{\K_{\beta(P_r)}^{c}})+d(P_r,\K_{\alpha(P_r)})} \\
&=&\dfrac{L'\cdot d(P_r,\overline{\K_{L}^{c}})+L\cdot
d(P_r,\K_{L'})}{d(P_r,\overline{\K_{L}^{c}})+d(P_r,\K_{L'})}
 \end{eqnarray*}}
while
\begin{eqnarray*}
\p(P)=\dfrac{L'\cdot d(P,\overline{\K_{L}^{c}})+L\cdot
d(P,\K_{L'})}{d(P,\overline{\K_{L}^{c}})+d(P,\K_{L'})}.
\end{eqnarray*}
 Therefore $\lim_r\p(P_r)=\p(P)$, and
 hence the function $\p$ is continuous at $P$.

\item[$(ii):$] If $\beta(P_r)=\beta(P)=\alpha(P)=L>\alpha(P_r)=L'$
for every index $r$, the definition of the function $\p$ implies
that, in the case $\alpha(P_r)>i_{\min}$,
{\setlength\arraycolsep{1pt}\begin{eqnarray*}
\p(P_r)&=&\dfrac{\alpha(P_r)\cdot
d(P_r,\overline{\K_{\beta(P_r)}^{c}})+\beta(P_r)\cdot
d(P_r,\K_{\alpha(P_r)})}{d(P_r,\overline{\K_{\beta(P_r)}^{c}})+d(P_r,\K_{\alpha(P_r)})} \\
&=&\dfrac{L'\cdot d(P_r,\overline{\K_{L}^{c}})+L\cdot
d(P_r,\K_{L'})}{d(P_r,\overline{\K_{L}^{c}})+d(P_r,\K_{L'})},
 \end{eqnarray*}}
otherwise, if $\alpha(P_r)=i_{\min}$, $\p(P_r)=\beta(P_r)=L$.
Recalling that $P\in
\overline{\K_{\alpha(P)}^c}=\overline{\K_{L}^c}$ and $\lim_r
P_r=P$, it follows that, in both cases, $\lim_r\p(P_r)=L$. On the
other hand $\p(P)=\alpha(P)=L.$
 Therefore $\lim_r\p(P_r)=\p(P)$, and
 hence the function $\p$ is continuous at $P$.
\item[$(iii):$] If
$\beta(P_r)=\beta(P)=\alpha(P)=L=\alpha(P_r)=L'$ for every index
$r$, the definition of the function $\p$ implies that
$\p(P_r)=\p(P)=L$ for every index $r$. Therefore
$\lim_r\p(P_r)=\p(P)$, and
 hence the function $\p$ is continuous at $P$ also in this case.
\end{itemize}
\end{itemize}
%\end{description}

\end{proof}

Let us observe that, dropping the assumption of stability
(Definition~\ref{stableDef}), Theorem~\ref{monofil} does not hold,
as the following examples show. The first one does not verify
property $(a)$ in Definition \ref{stableDef}, the second one does
not verify property $(b)$ in Definition \ref{stableDef}.
\begin{exa}\label{exastab-a}
Let $K$ be the closed interval $[0,2]$, and $I=\{-1\}\cup[0,1]$. Let us
consider the compact sets
$$
\K_i=\left\{\begin{array}{ll}
\emptyset &\mbox{if}\,\,i=-1\\
\{0\}&\mbox{if}\,\,i=0\\
{[0,i+1]} &\mbox{if}\,\,i \in ]0,1].
\end{array}
\right.
$$
This filtration of $K$ is not stable because, contrarily to
Definition \ref{stableDef}~$(a)$, when the index $i$ tends to 0,
the compact sets $\K_i$ do not tend to $\K_0$.

Let us show that this filtration of the interval $K$ cannot be
induced by any function $\p:K\to\R$. Indeed, if such a function
$\p$ existed, we would have $\p(P)\le \eps$ for every $\eps>0$ and
every $P\in[0,1]$ since $[0,1]\subseteq \K_{\eps}$ for every
$\eps>0$. Therefore, $\p$ would take a non-positive value at each
$P\in[0,1]$, against the equality $\K_0=\{0\}$.
\end{exa}
\begin{exa}\label{exastab-b}
Let $K$ be the disk filtered by the family $\{\K_0, \K_1,\K_2,
\K_3\}$ in Figure~\ref{stability}, with $\K_0=\emptyset$ and
$\K_3=K$. This filtration of $K$ is not stable because, contrarily
to Definition~\ref{stableDef}~$(b)$, $\K_1\nsubseteq int(\K_2)$.
\begin{figure}[htbp]
\begin{center}
\psfrag{K}{$\K_3\equiv
K$}\psfrag{K1}{$\K_{1}$}\psfrag{K2}{$\K_{2}$} \psfrag{P}{$\bar P$}
\includegraphics[height=5cm]{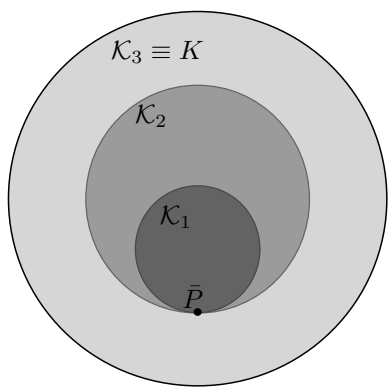}
\caption{\footnotesize{An example of non-stable $1$-dimensional
filtration of the disk $K$.}\label{stability}}
\end{center}
\end{figure}
Let us show that this filtration of the disk $K$ cannot be induced
by a continuous function $\p:K\to\R$. Indeed, if such a continuous
function $\p$ existed, it should be that $\p(\bar P)\le 1$, since
$\bar P\in \K_1$.  On the other hand, if we consider a sequence
$(P_r)$ of points of $\K_3\setminus \K_2$ that converges to $\bar
P$, we should have $\p(\bar P)=\lim_r \p(P_r)\ge 2$ (since
$\p(P_r)>2$ for every index $r$, given that $P_r\notin K_2$). This
contradiction proves our statement.
\end{exa}

\section{Multi-dimensional filtrations}\label{Multi}
In this section, we extend the main result of Section \ref{Mono}
to $n$-dimensional filtrations, $n\geq 1$, i.e. to the case of
filtrations indexed by an $n$-dimensional parameter. Therefore, in
what follows, the symbol $I$ will denote a compact subset $I_1\times
I_2\times\ldots\times I_n$ of $\R^n$ and $p_j:I\to I_j$, $1\le
j\le n$, the projection of $I$ onto the $j$-th component.

For every fixed $j$ with $1\le j\le n$ and every $h\in I_j$, let
us set
$$
\K_h^j= \K_{(\max I_1,\ldots,\max I_{j-1},h,\max
I_{j+1},\ldots,\max I_n)}= \bigcup_{\substack{i\in I\\
p_j(i)=h}}\K_i.$$ We observe that $\{\K_{h}^j\}_{h\in I_j}$ is a
compact $1$-dimensional filtration of $K$.

\begin{defi}\label{defMulStability}
We shall say that a compact $n$-dimensional filtration
$\{\K_{i}\}_{i\in I}$ of $K$ is \emph{stable with respect to} $d$
if the compact $1$-dimensional filtrations $\{\K_{i_1}^1\}_{i_1\in
I_1}$, $\{\K_{i_2}^2\}_{i_2\in I_2}$, $\ldots$,
$\{\K_{i_n}^n\}_{i_n\in I_n}$ are stable with respect to $d$.
\end{defi}

\begin{defi}\label{defComplete}
A compact $n$-dimensional filtration $\{\K_{i}\}_{i\in I}$ of $K$
will be said to be \emph{complete} if, for every
$i=(i_1,\ldots,i_n)\in I$, $\K_{i}=
\K_{i_1}^1\cap\K_{i_2}^2\cap\ldots\cap\K_{i_n}^n$.
\end{defi}

\begin{rem}\label{remComplete}
Let us observe that, setting $i_{min}=(\min I_1,\min
I_2,\dots,\min I_n)$ and $i_{max}=(\max I_1,\max I_2,\dots,\max
I_n)$, Definition \ref{defComplete} implies that
$\K_{i_{min}}=\K_{\min I_1}^1\cap\K_{\min
I_2}^2\cap\ldots\cap\K_{\min
I_n}^n=\emptyset\cap\emptyset\cap\ldots\cap\emptyset=\emptyset$,
and $\K_{i_{max}}=\K_{\max I_1}^1\cap\K_{\max
I_2}^2\cap\ldots\cap\K_{\max I_n}^n=K\cap K\cap\ldots\cap K=K$.
\end{rem}

\begin{thm}\label{multfil}
Every compact, stable and complete $n$-dimensional filtration
$\{\K_{i}\}_{i\in I}$ of a compact metric space $K$ is induced by
a continuous function $\vp:K\to\R^n$.
\end{thm}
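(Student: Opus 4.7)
The plan is to reduce everything to the one-dimensional Theorem~\ref{monofil} by slicing along each coordinate. For each $j \in \{1,\ldots,n\}$, I would look at the one-dimensional slice filtration $\{\K_h^j\}_{h\in I_j}$. This is a compact one-dimensional filtration of $K$ (if the indexing set does not already contain a value at which the set is $\emptyset$, I would adjoin one below $\min I_j$ at which the slice is declared empty, which is allowed without disturbing compactness or stability), and it is stable with respect to $d$ by Definition~\ref{defMulStability}. Applying Theorem~\ref{monofil} to each slice, I would obtain continuous functions $\varphi_j : K \to \R$ satisfying $\K_h^j = \{P \in K : \varphi_j(P) \le h\}$ for every $h \in I_j$.

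I would then set $\vp = (\varphi_1, \ldots, \varphi_n) : K \to \R^n$, which is continuous because each component is. To check that $\vp$ induces the given filtration, I would fix $i = (i_1, \ldots, i_n) \in I$ and compute, using the definition of $\preceq$,
\[
\{P \in K : \vp(P) \preceq i\} \;=\; \bigcap_{j=1}^n \{P \in K : \varphi_j(P) \le i_j\} \;=\; \bigcap_{j=1}^n \K_{i_j}^j.
\]
By the completeness hypothesis (Definition~\ref{defComplete}), the last intersection is exactly $\K_i$, which is what I need.

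In short, the heavy analytic lifting is entirely inside Theorem~\ref{monofil}; completeness is the structural ingredient that lets me reassemble the $n$ coordinate-wise sub-level sets into the original multi-dimensional filtration, while the multi-dimensional stability assumption is exactly what guarantees each slice satisfies the one-dimensional stability hypothesis. The only mildly delicate point I anticipate is verifying that each slice filtration really meets the hypotheses of Theorem~\ref{monofil}, in particular that $\emptyset$ belongs to it; this either holds directly (for instance when $\K_{(\max I_1,\ldots,\min I_j,\ldots,\max I_n)}$ happens to be empty, which by completeness and $\K_{i_{\min}}=\emptyset$ is forced whenever the other slices at $\max$ are all of $K$) or can otherwise be arranged by the standard extension of the index set $I_j$, as sanctioned by the remark following Definition~\ref{compact}.
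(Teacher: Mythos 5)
Your proposal is correct and follows essentially the same route as the paper's proof: slice the filtration into the one-dimensional filtrations $\{\K_{i_j}^j\}_{i_j\in I_j}$, apply Theorem~\ref{monofil} to each (using Definition~\ref{defMulStability}), and reassemble the sub-level sets via completeness (Definition~\ref{defComplete}). Your extra remark about ensuring $\emptyset$ belongs to each slice filtration is a point the paper passes over silently, and your proposed fix is sound.
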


\begin{proof}
By Definition \ref{defComplete}, the completeness of
$\{\K_{i}\}_{i\in I}$ implies that, for every
$i=(i_1,i_2,\ldots,i_n)\in I$, $\K_{i}$ is equal to
$\K_{i_1}^1\cap\K_{i_2}^2\cap\ldots\cap\K_{i_n}^n$. Moreover, by
Definition~\ref{defMulStability}, the stability of
$\{\K_{i}\}_{i\in I}$ implies the stability of the $1$-dimensional
filtrations $\{\K_{i_1}^1\}_{i_1\in I_1}$, $\{\K_{i_2}^2\}_{i_2\in
I_2}$, $\ldots$, $\{\K_{i_n}^n\}_{i_n\in I_n}$. Then, by Theorem
\ref{monofil}, for every $\{\K_{i_j}^j\}_{i_j\in I_j}$,
$j=1,\ldots,n$, there exists a continuous function $\p_j:K\to\R$
such that $\K_{i_j}^j=\{P\in K:\p_j(P)\leq i_j\}$ for every
$i_j\in I_j$. Hence, {\setlength\arraycolsep{1pt}\begin{eqnarray*}
&&\K_{(i_1,i_2,\ldots,i_n)}=\K_{i_1}^1\cap\K_{i_2}^2\cap\ldots\cap\K_{i_n}^n\\
&=& \{P\in K:\p_1(P)\leq i_1\}\cap\{P\in K:\p_2(P)\leq
i_2\}\cap\ldots\cap\{P\in K:\p_n(P)\leq i_n\}\\
&=&\{P\in
K:\vp(P)=(\p_1,\p_2,\ldots,\p_n)(P)\preceq(i_1,i_2,\ldots,i_n)\}.
\end{eqnarray*}}
Therefore, the function $\vp:K\to\R^n$ induces
$\{\K_{i}\}_{i\in I}$. Moreover, $\vp$ is continuous since its components
$\p_1,\p_2,\ldots,\p_n:K\to\R$ are continuous.
\end{proof}

Let us observe that, without the assumption of completeness
(Definition \ref{defComplete}), Theorem \ref{multfil} does not
hold, as the following example shows.

\begin{exa}\label{exacomplete}
Let $K$ be the rectangle in Figure \ref{completeness}, filtered by
the family $\{\K_{(i_1,i_2)}\}$, with $(i_1,i_2)$ varying in the
set $I=\{0,1,2\}\times\{0,1,2\}$. From Remark \ref{remComplete},
we have $\K_{(0,i)}=\K_{(i,0)}=\emptyset$ for $i=0,1,2$, and
$\K_{(2,2)}=K$. We observe that $\{\K_{(i_1,i_2)}\}_{(i_1,i_2)\in
I}$ is stable since the $1$-dimensional filtrations
$\{\K_{i_1}^1\}_{i_1\in\{0,1,2\}}=\{\K_{(0,2)},\K_{(1,2)},\K_{(2,2)}\}$,
and
$\{\K_{i_2}^2\}_{i_2\in\{0,1,2\}}=\{\K_{(2,0)},\K_{(2,1)},\K_{(2,2)}\}$
are stable with respect to $d$. However,
$\{\K_{(i_1,i_2)}\}_{(i_1,i_2)\in I}$ is not complete since
$\K_{(1,1)}\subsetneqq\K_{(2,1)}\cap\K_{(1,2)}$.
\begin{figure}[htbp]
\begin{center}
\psfrag{K}{$\K_{(2,2)}\equiv K$}\psfrag{K11}{$\K_{(1,1)}$}
\psfrag{K12}{$\K_{(1,2)}$}\psfrag{K21}{$\K_{(2,1)}$}
\psfrag{P}{$\bar P$}
\includegraphics[height=5cm]{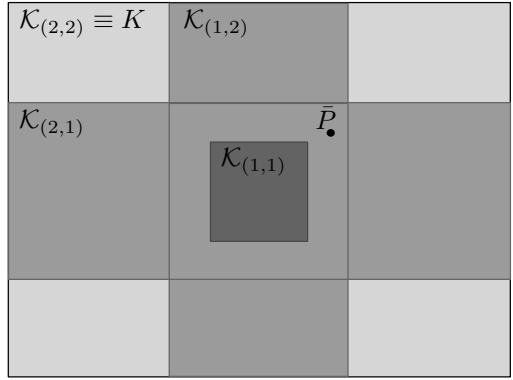}
\caption{\footnotesize{An example of non-complete 2-dimensional
filtration of the rectangle $K$.}\label{completeness}}
\end{center}
\end{figure}

Let us show that this 2-dimensional filtration of the rectangle
$K$ cannot be induced by a continuous function $\vp:K\to\R^2$.

Let $\bar P\in\K_{(1,2)}\cap \K_{(2,1)}\setminus\K_{(1,1)}$ as in
Figure \ref{completeness}. If there existed
$\vp=(\p_1,\p_2):K\to\R^2$ inducing this filtration, then
$\p_1(\bar P)\leq 1$ because $\bar P\in \K_{(1,2)}$, and
$\p_2(\bar P)\leq 1$ because $P\in \K_{(2,1)}$. Therefore,
$\vp(\bar P)\preceq (1,1)$. This means that $\bar P$ should belong
to $\K_{(1,1)}$, giving a contradiction.
\end{exa}

\section{Comparing filtrations via functions}\label{distance}
In the application of persistent homology to the problem of shape
comparison, it is natural to estimate the shape dissimilarity of
two spaces starting from the comparison of filtrations defined on
them. Nowadays, this problem is usually tackled by computing the
bottleneck distance between the persistence diagrams associated
with each filtration. Unfortunately, the loss of information due
to the passage from filtrations to persistence diagrams often
makes these descriptors unable to distinguish different shapes
(see e.g. \cite{CaFePo01,FrLa11,DiLa12}).

As proved in the previous sections, under appropriate assumptions,
every filtration of a compact space is induced by at least one
continuous function. Hence, by virtue of this fact, it is possible
to directly compare two filtrations by computing distances between
the associated filtering functions defined as in Theorem
\ref{monofil} (for the case $n=1$) and Theorem \ref{multfil} (for
the case $n>1$). For example, we could use the natural
pseudo-distance if we are interested in the functions' invariance
under the action of homeomorphisms, or the $L$-infinity distance
if this is not the case.

Let us recall that the natural pseudo-distance between two
continuous functions $\p,\p':K\to\R^n$ is defined as
$\delta(\p,\p')=\underset{h\in\mathcal{H}(K)}\inf\|\p-\p'\circ
h\|_{\infty}$, where $\mathcal{H}(K)$ denotes the set of all
self-homeomorphisms of $K$ \cite{DoFr04,DoFr07,DoFr09,FrMu99}. The
use of the natural pseudo-distance implies that the distance
between the filtrations induced by $\p$ and $\p\circ h$, with
$h\in\mathcal{H}(K)$, vanishes, so that these filtrations are
considered equivalent.
%
%%Given two compact and stable filtrations $F_K$ and $F'_K$ of a
%%compact metric space $K$, we can define the following
%%pseudo-distance:
%%$$\Delta(F_K,
%%F'_K)=\underset{\scriptsize{\begin{array}{c}\p\in\Phi_{(K,\R^n)}\\\s\in\Psi_{(K,\R^n)}\end{array}}}\inf\delta(\p,\s),$$
%%where $\Phi_{(K,\R^n)},\Psi_{(K,\R^n)}$ are the sets of all the
%%filtering functions inducing $F_K$ and $F'_K$, respectively, and
%%are non-empty by virtue of Theorem \ref{monofil} (for $n=1$) and
%%Theorem \ref{multfil} (for $n>1$).
%
%
%
%
%%\begin{figure}[htbp]
%\begin{center}
%\psfrag{a}{$0$} \psfrag{b}{$1$}\psfrag{c}{$2$} \psfrag{B2}{$p_2$}
%\psfrag{d}{$3$} \psfrag{e}{$4$} \psfrag{k}{$5$} \psfrag{f}{$\p$}
%\psfrag{g}{$\p'$} \psfrag{M}{$\max z$} \psfrag{m}{$\min z$}
%\includegraphics[width=8cm]{esempio_tri.eps}
%\caption{\footnotesize{Two curves with the same persistence
%diagrams for any homology degree, whose natural pseudo-distance is
%non-zero.}}\label{esempio_tri}
%\end{center}
%\end{figure}
\begin{figure}[htbp]
\begin{center}
\psfrag{a}{$1$} \psfrag{b}{$2$}\psfrag{c}{$3$} \psfrag{B2}{$p_2$}
\psfrag{d}{$4$} \psfrag{e}{$5$} \psfrag{k}{$6$} \psfrag{f}{$\p$}
\psfrag{f'}{$\p'$} \psfrag{K1}{$\K_1$}
\psfrag{K2}{$\K_2$}\psfrag{K3}{$\K_3$}\psfrag{K4}{$\K_4$}\psfrag{K5}{$\K_5$}\psfrag{K6}{$\K_6$}\psfrag{K'1}{$\K'_1$}
\psfrag{K'2}{$\K'_2$}\psfrag{K'3}{$\K'_3$}\psfrag{K'4}{$\K'_4$}\psfrag{K'5}{$\K'_5$}\psfrag{K'6}{$\K'_6$}
\psfrag{f}{$\p$} \psfrag{g}{$\p'$} \psfrag{M}{$\max
z$}\psfrag{0}{$0$}\psfrag{2p}{$2\pi$}\psfrag{R}{$\scriptstyle{R}$}\psfrag{R'}{$R'$}
\psfrag{m}{$\min
z$}\psfrag{A}{$A$}\psfrag{B}{$\scriptstyle{B}$}\psfrag{A'}{$\scriptstyle{A'}$}\psfrag{B'}{$\scriptstyle{B'}$}\psfrag{C}{$\scriptstyle{C}$}
\psfrag{C'}{$\scriptstyle{C'}$}\psfrag{D}{$\scriptstyle{D}$}\psfrag{D'}{$\scriptstyle{D'}$}\psfrag{E}{$\scriptstyle{E}$}\psfrag{F}{$\scriptstyle{F}$}
\psfrag{G}{$\scriptstyle{G}$}\psfrag{H}{$\scriptstyle{H}$}
\psfrag{O}{$O$}\psfrag{T'}{$\scriptstyle{T'}$}\psfrag{Q}{$\scriptstyle{Q}$}\psfrag{Q'}{$\scriptstyle{Q'}$}\psfrag{S'}{$\scriptstyle{S'}$}
\includegraphics[width=12cm]{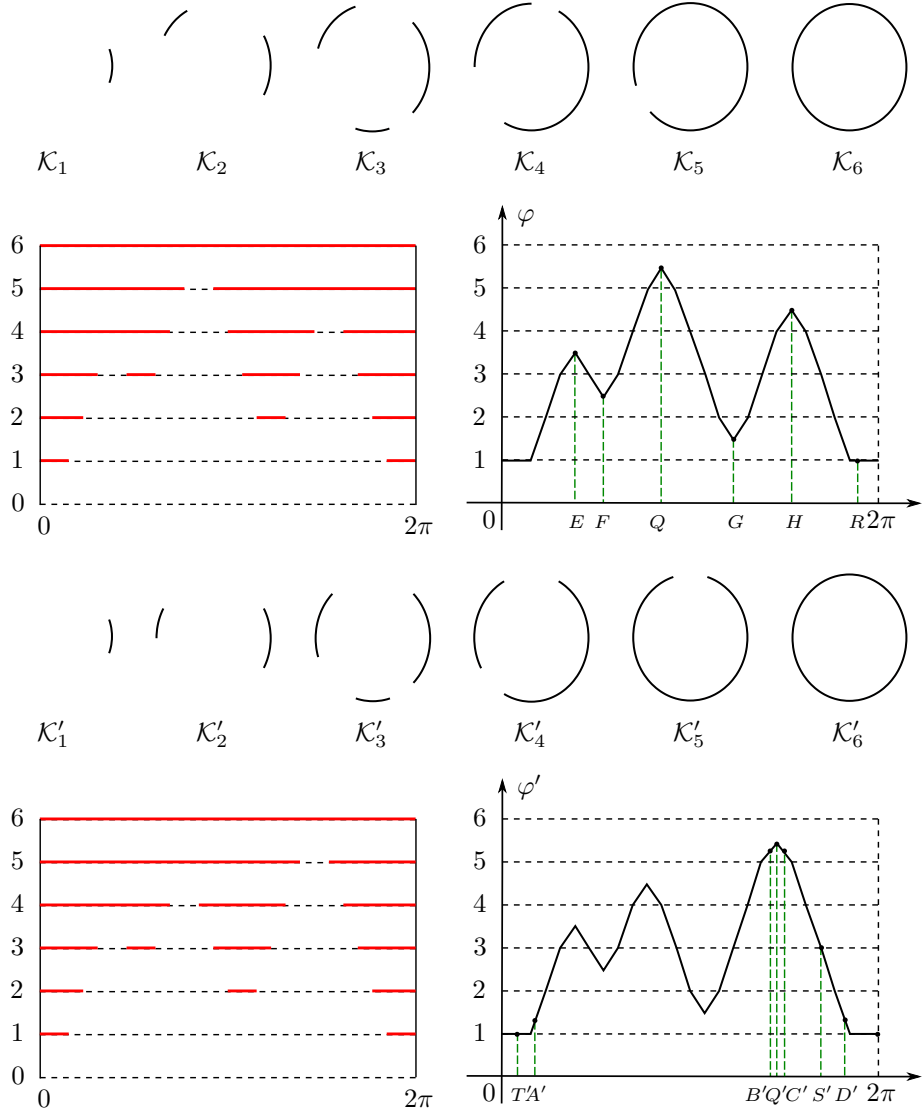}
\caption{\footnotesize{Two filtrations of $S^1$ with the same
persistence diagrams but a non-zero natural pseudo-distance
between the inducing functions.}}\label{esempio_tri}
\end{center}
\end{figure}

The choice of comparing two filtrations in terms of the natural
pseudo-distance between the associated filtering functions results
to be more powerful than the bottleneck distance between the
associated persistence diagrams, in distinguishing two different
filtrations. We will show this fact through an example inspired to
the one in \cite{DiLa12}.

Let $K$ be the circle $S^1$, and consider the two stable finite
filtrations $\{\K_i\}$ and $\{\K'_i\}$ shown in Figure
\ref{esempio_tri} which are defined on the set of indices
$I=\{0,1,2,3,4,5,6\}$, and are such that
$\K_0\equiv\K'_0\equiv\emptyset$, $\K_6\equiv\K'_6\equiv K$. Let
us construct $\p,\p':K\to\R$ as in the proof of
Theorem~\ref{monofil},
defining on $K$ the geodesic
distance $d$, and extending it to an arbitrary point $*\notin K$ by setting
$d(*,P)=\mbox{diam}(K)/2=\pi/2$ for every $P\in K$. The value of $\p$ and $\p'$ at each point of $K$
is its ordinate in the real plane. %graphs of $\p$ and
%$\p'$ are displayed in Figure \ref{esempio_tri}.

While the bottleneck distance between the persistence diagrams
associated with $\{\K_i\}$ and $\{\K'_i\}$ is zero in all homology
degrees, let us prove that $\delta(\p,\p')$ is positive. By
contradiction, let us assume that $\delta(\p,\p')=0$. Then, for
every $\eps>0$ sufficiently small, there should exist a
homeomorphism $h_{\eps}:K\to K$ such that $\underset{P\in K}\max
|\p(P)-\p'\circ h_{\eps}(P)|\leq \eps$. Such a homeomorphism
should take all the points of maximum (minimum, respectively) of
$\p$ to points near the points of maximum (minimum, respectively)
of $\p'$ with the same ordinate. Therefore, denoting by
$\bigfrown{xyz}$ the arc of $S^1$ which contains $y$ and has $x$
and $z$ as its endpoints, the points $Q$ and $R$ in Figure
\ref{esempio_tri} should be taken to
$h_{\eps}(Q)\in\bigfrown{B'Q'C'}$ and
$h_{\eps}(R)\in\bigfrown{D'T'A'}$, respectively, where
$A',B',C',D'$ are such that $\p'(B')=\p'(C')=\p(Q)-\eps$,
$\p'(A')=\p'(D')=\p(R)+\eps$. Hence, either
$\bigfrown{h_{\eps}(Q)S'h_{\eps}(R)}=h_{\eps}(\bigfrown{QGR})$ or
$\bigfrown{h_{\eps}(Q)S'h_{\eps}(R)}=h_{\eps}(\bigfrown{QER})$. As
proved in \cite{DiLa12}, in the first case $\underset{P\in
\scriptsize{\bigfrown{QGR}}}\max |\p(P)-\p'\circ h_{\eps}(P)|>
\dfrac{|\p(G)-\p(H)|}{2}$; in the second case,  $\underset{P\in
\scriptsize{\bigfrown{QER}}}\max |\p(P)-\p'\circ h_{\eps}(P)|>
\dfrac{|\p(E)-\p(F)|}{2}$. In conclusion, $\underset{P\in K}\max
|\p(P)-\p'\circ
h_{\eps}(P)|>\min\left\{\dfrac{|\p(G)-\p(H)|}{2},\dfrac{|\p(E)-\p(F)|}{2}\right\}>
\eps$, against the assumption. Hence persistent homology is not
able to distinguish the two considered shapes, contrarily to the
natural pseudo-distance between the associated filtering
functions.

%Indeed, for every pair of distinct points $P,Q\in S^1$, and every
%homeomorphism $h:S^1\to S^1$, there exists at least one point $R$
%belonging to one of the two arcs of $S^1$ which connect $P$ and
%$Q$ such that $h(R)$ belongs to one of the two arcs of $S^1$ which
%connect $h(P)$ and $h(Q)$ and $|\p(R)-\p'(h(R))|>0$. On the other
%side, the bottleneck distance between the persistence diagrams
%associated with $\{\K_i\}$ and $\{\K'_i\}$ is zero in all homology
%degrees. Hence persistent homology is not able to distinguish the
%two considered shapes, contrarily to the natural pseudo-distance
%between the associated filtering functions.

\subsection*{Acknowledgments}
The authors gratefully acknowledge the anonymous referee for
her/his help in improving the paper. Thanks also to I. Halevy and
D. Burghelea for their helpful comments and stimulating
conversations. However, the authors are solely responsible for any
possible errors.

Finally, they wish to express their gratitude to M. Ferri, C.
Landi, and A. Cerri for their indispensable support and
friendship.

This paper is dedicated to Filippo and Sara.
\bibliographystyle{amsplain}
\bibliography{Filbiblio}

\end{document}